\newtheorem{theorem}{Theorem}
\newtheorem{lemma}{Lemma}
\newenvironment{proof}[1][Proof]{\textbf{#1.} }{\ \rule{0.5em}{0.5em}}
\date{}
\long\def\symbolfootnote[#1]#2{\begingroup%
	\def\thefootnote{$\;$}\footnote[#1]{$^*$#2}\endgroup}
\begin{document}
	
	\title{A short mathematical proof of \\the Frankiewicz - Kunen theorem}
	\author{Ryszard Frankiewicz and Joanna Jureczko}
\maketitle

\symbolfootnote[2]{Mathematics Subject Classification: 5Primary 03C25, 03E55, 03C20, 54E52). 
	
	\hspace{0.2cm}
	Keywords: \textsl{Kuratowski partition, precipitous ideal, K-ideal.}}

\begin{abstract}
In this paper there is proved without any metamathematical techniques that the existence of precipitous ideals immediately follows from the existence of Kuratowski partitions.
\end{abstract}

\maketitle

\section{Introduction}

In 1935 K. Kuratowski in \cite{KK} posed the problem whether a function $f \colon X \to Y$, (where $X$ is completely metrizable and $Y$ is metrizable), such that each preimage of an open set of $Y$ has the Baire property, is continuous apart from a meager set.

In \cite{EFK} there is shown the equivalence of this problem with the problem of the existence of partitions of completely metrizable spaces into meager sets with the property that the union of each subfamily of this partition has the Baire property. Such a partition is called a \textit{Kuratowski partition}, (see the next section for a formal definition).

In the 70's of the last century R. H. Solovay and L. Bukovsk\'y independently proved non-existence of Kuratowski partitions of a unit interval $[0,1]$ for measure and category using forcing methods (and the generic ultrapower), but Bukovsk\'y' proof, see \cite{LB}, is shorter and less complicated than Solovay's (unpublished results).

With  a Kuratowski partition there is associated, in a natural way, an ideal which is called in \cite{JW} a \textit{$K$-ideal}, (see the next section for a formal definition).
It can be supposed that from a structure of such a $K$-ideal one can decode full information about a Kuratowski partition of a given space. Unfortunately, it is not so because, as was shown in \cite{JW}, the structure of such an ideal can be almost arbitrary, i.e. it can be a Fr\'echet ideal, so in the presence of 
\cite[Lemma 35.9, p. 440]{TJ} it is not precipitous if $\kappa$ is regular, (see the next section for the definition of precipitous ideals).
Moreover, there is shown in \cite{JW} that for each measurable cardinal $\kappa$, a $\kappa$-complete ideal can be represented by some $K$-ideal.
Thus, for obtaining a Kuratowski partition from a $K$-ideal we need full information about the space in which the ideal is considered.

The natural question is about assumptions under which the existence of Kuratowski partitions and precipitous ideals are related, i.e. under which assumptions a $K$-ideal  is precipitous.
In \cite{FK} there is shown, among others, that ZFC + there is a Kuratowski partition is consistent, then ZFC + there is a measurable cardinal is consistent as well, using  forcing methods in the proof (i. e. a model of the G-generic ultrapower in Keisler sense), (see \cite[sec. 6.4]{CK} and \cite{TJ} for details) and the Banach Localization Theorem, (see \cite[p. 82]{KK1}). 

The main goal of this paper is to show that the existence of a Kuratowski partition $\mathcal{F}$ in a metric Baire space implies the existence of a precipitous $K$-ideal associated with $\mathcal{F}$. 
As we will show, using the Banach Localization Theorem we do not need to use forcing techniques, which is the main idea in the presented proof. 
In the contrary to enlarge spaces in proofs in \cite{JW} we will reduce a space and use only some combinatorial properties of precipitous ideals,  (reminded in the next section) and the Baire Category Theorem. 

\section{Definitions and previous results}
Let $X$ be a topological space and $\kappa$ be a cardinal, ($\kappa$ may be assumed as a regular cardinal).

We say that a family $\mathcal{F}$ consisted of meager sets of $X$ such that $\bigcup\mathcal{F} = X$ is a \textit{Kuratowski partition} if $\bigcup \mathcal{F}'$ has the Baire property for any subfamily $\mathcal{F'}\subset \mathcal{F}$ .

With any Kuratowski partition 
$\mathcal{F} = \{F_\alpha \colon \alpha < \kappa\}$, indexed by $\kappa$,  one may associate an ideal 
$$I_\mathcal{F} = \{A \subset \kappa \colon \bigcup_{\alpha \in A} F_\alpha \textrm{ is meager}\}$$
which is called \textit{a $K$-ideal}, (see \cite{JW}).

Let $I$ be an ideal on $\kappa$ and let $S$ be a set with positive measure, i.e. $S \in P(\kappa) \setminus I = I^+$. An \textit{$I$-partition} of $S$ is a maximal family $W$ of subsets of $S$ of positive measure such that $A \cap B \in I$ for all distinct $A, B \in W$.
An $I$-partition $W_1$ of $S$ is a \textit{refinement} of an $I$-partition $W_2$ of $S$, ($W_1 \leq W_2$), if each $A \in W_1$ is a subset of some $B\in W_2$.

A \textit{functional} on $S$ is a collection $\Phi$ of functions such that $W_\Phi = \{dom(f) \colon f \in \Phi\}$ is an $I$-partition of $S$ and $dom(f) \not = dom(g)$, whenever $f \not=g \in \Phi$.
\\
We define $\Phi < \Psi$ if
\\
(i) each $f \in \Phi \cup \Psi$ is a function into the ordinals;
\\
(ii) $W_\Phi \leq W_\Psi$;
\\
(iii) if $f \in \Phi$ and $g \in \Psi$ are such that $dom(f) \subseteq dom(g)$, then $f(x) < g(x)$ for all $x \in dom(f)$.

If  $I$ is a $\kappa$-complete ideal on $\kappa$ containing singletons then $I$ is \textit{precipitous} iff whenever $S$ is a set of a positive measure and $\{W_n \colon n < \omega\}$ is a sequence of  $I$-partitions of $S$ such that 
$W_0 \geq W_1\geq ... \geq W_n \geq ...$
then there exists a sequence of sets
$X_0 \supseteq X_1\supseteq ... \supseteq X_n \supseteq ...$
such that $X_n \in W_n$ for each $n\in \omega$ and $\bigcap_{n=0}^{\infty} X_n \not = \emptyset$, (see also \cite[p. 438-439]{TJ}).
\\

In the proof of the next theorem we will need the following characterization of precipitous ideals, (see \cite[Lemma 35.8, p. 439]{TJ}).

\begin{lemma}[\cite{TJ}] The following are equivalent
	
	(i) $I$ is precipitous;
	
	(ii) For no $S$ of a positive measure is there a sequence of functionals on $S$ such that $\Phi_0>\Phi_1>...>\Phi_n>...\ .$
\end{lemma}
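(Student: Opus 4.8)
The plan is to prove the two implications separately, with the forward direction (i)$\Rightarrow$(ii) being a short ordinal argument and the converse carrying the real content.

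For (i)$\Rightarrow$(ii) I would argue by contradiction: assume $I$ is precipitous yet there is a set $S \in I^+$ carrying a sequence of functionals $\Phi_0 > \Phi_1 > \cdots$. Then the associated $I$-partitions satisfy $W_{\Phi_0} \geq W_{\Phi_1} \geq \cdots$, so precipitousness supplies a decreasing chain $X_0 \supseteq X_1 \supseteq \cdots$ with $X_n \in W_{\Phi_n}$ and a point $x \in \bigcap_n X_n$. Writing $f_n \in \Phi_n$ for the function with $dom(f_n) = X_n$, condition (iii) in the definition of $\Phi_{n+1} < \Phi_n$ forces $f_{n+1}(x) < f_n(x)$, since $x \in X_{n+1} \subseteq X_n$. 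Thus $f_0(x) > f_1(x) > \cdots$ is an infinite strictly descending sequence of ordinals, which is impossible.

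The substance is the converse, which I would prove in contrapositive form: assuming $I$ is not precipitous, I construct a descending sequence of functionals. Fix a witness, that is, a set $S \in I^+$ and a refining sequence $W_0 \geq W_1 \geq \cdots$ of $I$-partitions such that every chain $X_0 \supseteq X_1 \supseteq \cdots$ with $X_n \in W_n$ has $\bigcap_n X_n = \emptyset$. The idea is to organise the pieces into a forest $F$ whose nodes are the pairs $(n,A)$ with $A \in W_n$, declaring $(n+1,B)$ a child of $(n,A)$ exactly when $B \subseteq A$; refinement guarantees every node at a positive level has a parent, and distinctness of pieces in an $I$-partition (so that $A \cap A' \in I$) makes this parent unique. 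The crucial observation is that, although $F$ itself has infinite branches, for each fixed ordinal $x \in S$ the subforest $F_x$ consisting of those $(n,A)$ with $x \in A$ is well-founded: an infinite branch of $F_x$ would yield, after extending downward through parents, a full chain $X_0 \supseteq X_1 \supseteq \cdots$ with every $X_n \ni x$, contradicting $\bigcap_n X_n = \emptyset$. This is precisely the step where the failure of precipitousness is converted into well-foundedness, and I expect it to be the main obstacle — getting the indexing and the downward extension right so that the empty-intersection hypothesis genuinely applies.

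With well-foundedness in hand, I would let $\rho_x(n,A)$ denote the rank of $(n,A)$ in $F_x$ and define, for each $A \in W_n$, a function $f^{(n)}_A \colon A \to Ord$ by $f^{(n)}_A(x) = \rho_x(n,A)$. Setting $\Phi_n = \{ f^{(n)}_A \colon A \in W_n \}$, each $\Phi_n$ is a functional with $W_{\Phi_n} = W_n$ and values in the ordinals. Finally I would verify $\Phi_n > \Phi_{n+1}$: conditions (i) and (ii) are immediate from the construction and from $W_n \geq W_{n+1}$, while for (iii), whenever $B \in W_{n+1}$, $A \in W_n$ and $B \subseteq A$, any $x \in B$ makes $(n+1,B)$ a child of $(n,A)$ in $F_x$, so $\rho_x(n+1,B) < \rho_x(n,A)$, that is, $f^{(n+1)}_B(x) < f^{(n)}_A(x)$. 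Hence $\Phi_0 > \Phi_1 > \cdots$ is a descending sequence of functionals on $S$, contradicting (ii) and completing the contrapositive.
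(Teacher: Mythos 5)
Your proof is correct; both directions are sound, and in particular the key step of the converse --- converting the empty-intersection witness of non-precipitousness into well-foundedness of each forest $F_x$ and then taking rank functions as the functionals --- is exactly the standard argument. Note that the paper itself gives no proof of this lemma (it is quoted from Jech, Lemma 35.8), so your argument is to be compared with that reference, with which it essentially coincides.
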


\section{The main result}

\begin{theorem}
	Let $X$ be a metric Baire space with a Kuratowski partition $\mathcal{F}$. Then there exists an open set $U \subset X$ such that a $K$-ideal $I_{\mathcal{F}\cap U}$ is precipitous.
\end{theorem}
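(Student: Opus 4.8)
The plan is to reduce everything to the functional characterization of Lemma 1 and then to argue by contradiction: assuming $I_{\mathcal{F}\cap U}$ is not precipitous, I would produce a strictly decreasing $\omega$-sequence of ordinals, which is absurd. Throughout I write $F_A = \bigcup_{\alpha\in A}F_\alpha$ for $A\subseteq\kappa$, so that by the defining property of a Kuratowski partition every $F_A$ has the Baire property, and $A\in I_{\mathcal{F}}$ iff $F_A$ is meager. I would also assume, as is standard for Kuratowski partitions, that the $F_\alpha$ are pairwise disjoint, so that each point lies in exactly one $F_\alpha$; this disjointness is what ultimately pins down a \emph{single} index $\alpha_0$ at the end of the argument.

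First I would fix the open set $U$. Here is where the Banach Localization Theorem enters and, I expect, where the main difficulty lies: the global ideal $I_{\mathcal{F}}$ may be degenerate (e.g.\ Fr\'echet, hence non-precipitous), so one must localize to an open $U$ on which $I_{\mathcal{F}\cap U}$ is a proper, $\kappa$-complete ideal containing singletons, which are exactly the hypotheses needed to invoke Lemma 1. Singletons lie in the ideal because each $F_\alpha$ is meager, and properness is automatic since $U$ is a nonempty open subset of a Baire space and hence non-meager; the delicate point is using the localization theorem to select $U$ so that the additivity/completeness hypothesis of Lemma 1 holds there. After this reduction I replace $X$ by $U$ (still a metric Baire space) and argue inside it.

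Suppose then that $I_{\mathcal{F}\cap U}$ is not precipitous. By Lemma 1 there is a positive $S$ and functionals $\Phi_0>\Phi_1>\cdots$ on $S$. For each $n$ let $W_{\Phi_n}=\{\mathrm{dom}(f)\colon f\in\Phi_n\}$; I would first disjointify each $W_{\Phi_n}$ modulo $I$ (the overlaps lie in $I$ and do not affect the strict inequalities in clause (iii)), and note that maximality of an $I$-partition forces $S\setminus\bigcup W_{\Phi_n}\in I$. Consequently, setting $R_n=\bigcup_{D\in W_{\Phi_n}}F_D=F_{\bigcup W_{\Phi_n}}$, the set $R_n$ differs from $F_S$ by the meager set $F_{S\setminus\bigcup W_{\Phi_n}}$. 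Fixing a nonempty open $O_S$ in which $F_S$ is comeager (possible by the Baire property, and $O_S\neq\emptyset$ since $S$ is positive), each $R_n$ is therefore comeager in $O_S$; and since $W_{\Phi_{n+1}}$ refines $W_{\Phi_n}$ we get $\bigcup W_{\Phi_{n+1}}\subseteq\bigcup W_{\Phi_n}$ and hence $R_{n+1}\subseteq R_n$.

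The endgame is a single application of the Baire Category Theorem. As a decreasing sequence of comeager subsets of the Baire space $O_S$, the intersection $\bigcap_n R_n$ is comeager in $O_S$, in particular nonempty, so I may fix a point $p$ in it. By disjointness of the partition, $p\in F_{\alpha_0}$ for a unique $\alpha_0<\kappa$, and for each $n$ the relation $p\in R_n$ together with disjointness of $W_{\Phi_n}$ gives a unique $D_n\in W_{\Phi_n}$ with $\alpha_0\in D_n$. Refinement and this uniqueness force $D_{n+1}\subseteq D_n$, so $\alpha_0\in\bigcap_n D_n$. Writing $D_n=\mathrm{dom}(f_n)$ with $f_n\in\Phi_n$, clause (iii) of $\Phi_{n+1}<\Phi_n$ applied at $\alpha_0\in D_{n+1}\subseteq D_n$ yields $f_{n+1}(\alpha_0)<f_n(\alpha_0)$, i.e.\ an infinite strictly decreasing sequence of ordinals, a contradiction. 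Hence $I_{\mathcal{F}\cap U}$ is precipitous. I expect the two steps requiring the most care to be the selection of $U$ via Banach Localization (paragraph two) and the bookkeeping showing that maximality and refinement transfer to the comeager sets $R_n$; the Baire-category extraction of $p$ and the final descent are then routine.
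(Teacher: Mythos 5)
There is a genuine gap, and it sits exactly at the step you call ``routine'': the extraction of the decreasing chain $D_0\supseteq D_1\supseteq\cdots$ through $\alpha_0$. The members of an $I$-partition $W_{\Phi_n}$ are only pairwise almost disjoint ($A\cap B\in I$), so the fixed index $\alpha_0$ may lie in infinitely many members of each $W_{\Phi_n}$. Knowing $\alpha_0\in\bigcup W_{\Phi_n}$ for every $n$ therefore only yields, for each $n$ separately, \emph{some} finite chain $D_0\supseteq\cdots\supseteq D_n$ through $\alpha_0$ (pick $D_n\ni\alpha_0$ and climb up via refinement); that is, a tree of finite chains with nodes on every level. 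This tree is in general infinitely branching, so it can have no infinite branch, and without an infinite branch you never obtain $\mathrm{dom}(f_{n+1})\subseteq\mathrm{dom}(f_n)$ for all $n$ simultaneously, hence never get to apply clause (iii) infinitely often. Your proposed repair --- disjointifying each $W_{\Phi_n}$ modulo $I$ --- does not close this: shrinking the members of $W_{\Phi_n}$ and of $W_{\Phi_{n+1}}$ independently destroys the refinement relation (the disjointified piece of $W_{\Phi_{n+1}}$ containing $\alpha_0$ is contained in \emph{some} original member of $W_{\Phi_n}$, which need not be the one whose disjointified remnant contains $\alpha_0$), and the literal inclusion of domains is precisely what clause (iii) requires. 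Had the chain existed, you would not even need the functionals: a decreasing chain $X_n\in W_n$ with $\alpha_0\in\bigcap_nX_n$ already contradicts non-precipitousness by its very definition, which is a sign the step cannot be free.

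A sanity check shows the gap is fatal rather than cosmetic: after your second paragraph the argument uses no property of $U$ beyond $I_{\mathcal{F}\cap U}$ being a proper $\kappa$-complete ideal containing singletons, so if it worked it would prove that \emph{every} such $K$-ideal, in particular $I_{\mathcal{F}}$ itself with $U=X$, is precipitous. But the introduction of the paper recalls (from \cite{JW} together with \cite[Lemma 35.9]{TJ}) that a $K$-ideal can be the Fr\'echet ideal on a regular $\kappa$ and hence is not precipitous. The entire content of the theorem lies in the choice of $U$, which you flag as ``the delicate point'' but never actually carry out or use. The paper's proof is organized around exactly this: it introduces the family $FN(\kappa)$ of colorings constant on the pieces $F_\alpha\cap U$ of dense disjoint open refinements, assumes failure for \emph{all} such $U$, glues the resulting functionals across all pieces of all refinements $\mathcal{F}_f$ and across all $f\in FN(\kappa)$, and only then applies the Baire Category Theorem to the glued data to manufacture the descending sequence of ordinals. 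So your route is genuinely different from the paper's, but in its present form it does not prove the statement.
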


\begin{proof}
Let $\kappa = \min \{|\mathcal{F}| \colon \mathcal{F} \textrm{ is a Kuratowski partition of } X\}$.
Let $$\mathcal{F} = \{F_\alpha \colon \alpha < \kappa\}$$ be a fixed Kuratowski partition of $X$ and 
let \\$FN(\kappa) = \{f \in {^X}\kappa \colon \exists_{\mathcal{U}_f} \textrm{ a family of }  \textrm{open disjoint sets which is }  \textrm{dense in } X  \\\textrm{ and } \forall_{F_\alpha \in \mathcal{F}} \forall_{U \in \mathcal{U}_f} f \textrm{ is constant on }  F_\alpha\cap U\}$, \\ (compare \cite[proof of Theorem 3.3]{FK}).

We claim that there exists $f \in FN(\kappa)$ and $U \in \mathcal{U}_f$ such that 
$$I_{\mathcal{F}\cap U} = \{A \subset \kappa \colon \bigcup_{\alpha \in A} F_\alpha\cap U \textrm{ is meager, } F_\alpha \in \mathcal{F}\}$$ is precipitous.

Suppose not. 
For any $f \in FN(\kappa)$ accept the following notation
$$\mathcal{F}_f = \{F_\alpha \cap U \colon F_\alpha \in \mathcal{F}, U \in \mathcal{U}_f\},$$
i.e. $\mathcal{F}_f$ is a refinement of $\mathcal{F}$ for preimages of $f \in FN(\kappa)$.

Then by Lemma 2.1 for each $F_\alpha \cap U \in \mathcal{F}_f$ there exists a sequence of functionals 
$\Phi^f_{0, F_\alpha \cap U} > \Phi^f_{1,F_\alpha \cap U}> ... $ on some set $S^f_{F_\alpha \cap U} \in P(\kappa)\setminus I_{\mathcal{F}\cap U}$.

Let $W^f_{i, F_\alpha \cap U}$ be an  $I_{\mathcal{F}\cap U}$-partition corresponding with $\Phi^f_{i,F_\alpha \cap U}$,  $i = 0, 1, ...\ $.
Then since $I_{\mathcal{F}\cap U}$ is not precipitous $\bigcap^{\infty}_{i=0} X^f_{i,F_\alpha \cap U} = \emptyset$ for all $X^f_{i, F_\alpha \cap U} \in W^f_{i, F_\alpha \cap U}$, $ i = 0, 1, ...\ .$ 
Each  $X^f_{i, F_\alpha \cap U}$ is a domain of some function $h^f_{i, F_\alpha \cap U} \in \Phi^f_{i, F_\alpha \cap U}$.
For each $h^{f}_{i, F_\alpha \cap U} \in \Phi^f_{i, F_\alpha \cap U}$ and $h^{f}_{i+1, F_\alpha \cap U} \in \Phi^f_{i+1, F_\alpha \cap U}$ with the property 
$dom(h^{f}_{i+1, F_\alpha \cap U}) \subseteq dom(h^{f}_{i, F_\alpha \cap U}) $ we have 
$$h^{f}_{i, F_\alpha \cap U} (\beta) >h^{f}_{i+1, F_\alpha \cap U}(\beta)$$ for all $\beta \in dom(h^{f}_{i+1, F_\alpha \cap U}) $ and $i = 0, 1, ...\ $.

Now,  take $\Phi^f_{i} =\bigcup_{(F_\alpha \cap U) \in \mathcal{F}_f} \Phi^f_{i, F_\alpha \cap U}, $ for each $i = 0, 1, ...$\ .
Obviously $\Phi^f_{0} > \Phi^f_{1} >...$ is a sequence of functionals on $S^f = \bigcup_{(F_\alpha \cap U) \in \mathcal{F}_f} S^f_{F_\alpha \cap U}\in P(\kappa)\setminus I_\mathcal{F}$ and $W^f_{i} = \bigcup_{(F_\alpha \cap U) \in \mathcal{F}_f} W^f_{i, F_\alpha \cap }$.
For each $i = 0, 1, ...$ consider $h^f_i = \bigcup_{(F_\alpha \cap U) \in \mathcal{F}_f}h^{f}_{i, F_\alpha \cap U}$. Obviously 
$dom(h^f_{i})$ is comeager and $$h^f_{i}(\beta) > h^f_{i+1} (\beta)$$ for all $\beta \in dom(h^f_{i+1})$ and $i = 0, 1, ...\ $.

Now, for each $i = 0, 1, ...$ consider $\Phi_i = \bigcup_{f \in FN(\kappa)} \Phi^f_i$. Repeating the adequate considerations as for $\Phi^f_i$ we obtain the sequences of functions 
$h_i = \bigcup_{f \in FN(\kappa)} h^f_i, i = 0, 1, ...$ such that 
$$h_{i}(\beta) > h_{i+1} (\beta)$$
for all $\beta \in dom(h_{i+1})$.

For each such a function $h_i$ and each $\beta \in dom (h_i)$ take $f_i \in FN(\kappa)$ such that 
$$h_i(\beta) = f_i (x),$$
where $x \in F_\beta \cap U$ for some $F_\beta \cap U \in \mathcal{F}_{f_i}, i = 0, 1, ... \ .$
By the Baire Category Theorem there exists $x \in X$ such that $f_{0}(x) > f_{1}(x) >...\ $. A contradiction with the well-foundness of the sequence. 
\end{proof}
 \\
 
As was shown above, if a space has a Kuratowski partition then the ideal associated with such a partition can be precipitous. As there is a difference between a complete metric metric space and a Baire metric space, in the \cite{FJ} we show that if there is a Kuratowski partition of a complete metric space then there is a measurable cardinal.

{\sc Ryszard Frankiewicz}
\\
Institute of Mathematics, Polish Academy of Sciences, Warsaw, Poland,
\\
{\sl e-mail: rf@impan.pl}
\\

{\sc Joanna Jureczko}
\\
Wroc\l{}aw University of Science and Technology, Wroc\l{}aw, Poland,
\\
{\sl e-mail: joanna.jureczko@pwr.edu.pl}

\end{document}